\documentclass[11pt,  a4paper,  reqno,  dvipsnames,  svgnames,  table]{amsart}
\usepackage{amsmath,amssymb,amscd,amsthm,mathtools,mathrsfs}
\usepackage{dsfont}

\usepackage{bm}
\usepackage{graphicx}
\usepackage{tikz}
\usepackage{tikz-cd}
\usepackage{subcaption}
\usepackage{blkarray}
\usepackage{nicematrix}
\usepackage{booktabs}
\usepackage{url}
\usepackage{pifont}
\usepackage{xcolor}
\definecolor{GreenDef}{RGB}{27,132,5}
\definecolor{BlueDef}{RGB}{11,78,188}
\definecolor{RedDef}{RGB}{15,77,146}
\definecolor{Yblue}{RGB}{160,4,23}
\usepackage[top=2cm,bottom=2cm,left=1.9cm,right=1.9cm]{geometry}
\usepackage{titlesec}
\usepackage{titletoc}
\titleformat{\section}[hang]
{\scshape\small\color{RedDef}\filcenter}{\S\ \thesection.}{0.2em}{}
\titleformat{\subsection}[runin]
{\scshape\color{RedDef}}{\thesubsection.}{0.2em}{}[.]
\contentsmargin{2.55em}
\dottedcontents{section}[3.8em]{}{1.5em}{1pc}
\dottedcontents{subsection}[6.1em]{}{2.5em}{1pc}
\usepackage[breaklinks,colorlinks,pagebackref]{hyperref}
\definecolor{Red}{RGB}{160,4,23}
\definecolor{Green}{RGB}{27,132,5}
\definecolor{Blue}{RGB}{11,78,188}
\hypersetup{linkcolor=Red,citecolor=Green}
\usepackage[capitalise,nameinlink]{cleveref}
\crefformat{equation}{(#2#1#3)}
\usepackage[msc-links,lite,alphabetic]{amsrefs}
\usepackage{mathscinet}

\usepackage{ulem}
\def\MR#1{%
  \relax\ifhmode\unskip\spacefactor3000 \space\fi
  \href{http://www.ams.org/mathscinet-getitem?mr=#1}{MR#1}
}

\makeatletter
\renewcommand{\BibLabel}{%
  \Hy@raisedlink{\hyper@anchorstart{cite.\CurrentBib}\hyper@anchorend}%
  [\thebib]%
}
\makeatother

\usetikzlibrary{decorations}
\usepgflibrary{decorations.fractals}
\usepgflibrary{shapes.symbols}
\usepgflibrary{shapes.misc}
\usepackage{color}
\usepackage{fancybox}
\usepackage{fancyhdr}
\usepackage{dsfont}

\usepackage{xcolor}
\usepackage{fontenc,cleveref}
\usepackage{epigraph}

\usepackage{boxedminipage}
\usepackage{wasysym}
\usepackage{latexsym}
\usepackage[T1]{fontenc}
\usepackage[latin9]{inputenc}
\usepackage{framed}
\usepackage{verbatim}
\usepackage{xspace}
\usepackage{stmaryrd}
\definecolor{brightmaroon}{rgb}{0.76, 0.13, 0.28}

\usepackage{tikz}
\usetikzlibrary{arrows.meta,calc}
\usepackage{bm}
\usepackage{graphicx}
\usepackage{tikz}
\usepackage{tikz-cd}
\usepackage{subcaption}
\usepackage{blkarray}
\usepackage{nicematrix}
\usepackage{booktabs}
\usepackage{url}
\usepackage{pifont}
\usepackage{xcolor}
\definecolor{GreenDef}{RGB}{27,132,5}
\definecolor{BlueDef}{RGB}{11,78,188}
\definecolor{RedDef}{RGB}{15,77,146}
\definecolor{Yblue}{RGB}{160,4,23}
\usepackage[top=2cm,bottom=2cm,left=1.9cm,right=1.9cm]{geometry}
\usepackage{titlesec}
\usepackage{titletoc}
\titleformat{\section}[hang]
{\scshape\small\color{RedDef}\filcenter}{\S\ \thesection.}{0.2em}{}
\titleformat{\subsection}[runin]
{\scshape\color{RedDef}}{\thesubsection.}{0.2em}{}[.]
\contentsmargin{2.55em}
\dottedcontents{section}[3.8em]{}{1.5em}{1pc}
\dottedcontents{subsection}[6.1em]{}{2.5em}{1pc}
\usepackage[breaklinks,colorlinks,pagebackref]{hyperref}
\definecolor{Red}{RGB}{160,4,23}
\definecolor{Green}{RGB}{27,132,5}
\definecolor{Blue}{RGB}{11,78,188}
\hypersetup{linkcolor=Red,citecolor=Green}
\usepackage[capitalise,nameinlink]{cleveref}
\crefformat{equation}{(#2#1#3)}
\usepackage[msc-links,lite,alphabetic]{amsrefs}
\usepackage{mathscinet}

\def\MR#1{%
  \relax\ifhmode\unskip\spacefactor3000 \space\fi
  \href{http://www.ams.org/mathscinet-getitem?mr=#1}{MR#1}
}

\makeatletter
\renewcommand{\BibLabel}{%
  \Hy@raisedlink{\hyper@anchorstart{cite.\CurrentBib}\hyper@anchorend}%
  [\thebib]%
}
\makeatother
\theoremstyle{plain}
\newtheorem{theorem}{Theorem}
\newtheorem{lemma}[theorem]{Lemma}
\newtheorem{proposition}[theorem]{Proposition}

\theoremstyle{definition}

\newtheorem{remark}[theorem]{Remark}

\newcommand{\QQ}{\mathbb{Q}}

\newcommand{\ZZ}{\mathbb{Z}}

\title{An infinite family of trees with irreducible characteristic polynomials}

 \author[Saieed Akbari]{Saieed Akbari}
 \address{Department of Mathematical Science, Sharif University of Technology, Tehran, Iran}
\email{s akbari@sharif.edu}

\author{ Keivan Mallahi-Karai}
\address{Constructor University, School of Science, Campus Ring I, 28759 Bremen, Germany}
\email{kmallahika@constructor.university}
\date{}

\begin{document}
\maketitle

\begin{abstract}
Consider the tree obtained by attaching a leaf to the third vertex of a path with $n-1$ vertices. In this note, we prove that the characteristic polynomial of this tree is irreducible when $n$ belongs to certain arithmetic progressions modulo $30$. As a result there are infinitely many pairwise non-isomorphic trees with an irreducible the characteristic polynomial. Our proof combines several number-theoretic arguments with a result of Gross, Hironaka, and McMullen \cite{GHM} concerning the cyclotomic factors of the Coxeter polynomials associated with the diagrams $E_n$. This resolves affirmatively a conjecture of Akbari, Kumar, Mohar and Pragada.
\end{abstract}
\section{Introduction}

Let $G$ be a finite simple graph, and let $ \chi_G(t)=\det(tI-A(G))$ denote the characteristic polynomial of its adjacency matrix
$A(G)$. We say that $G$ is irreducible if $\chi_G(t)$ is irreducible as a polynomial in $\QQ[t]$. This is a strong arithmetic condition on the spectrum of $G$ as it implies, in particular, that all eigenvalues of $G$ are simple and form a single orbit under the action of the absolute Galois group of $\QQ$. Irreducibility of the characteristic polynomial is also closely connected with several structural and spectral properties of a graph. For instance, if $G$ is irreducible, then the automorphism group of $G$ is trivial.  For more properties of such graphs, see \cite{YLZH}.

Several methods for constructing graphs with irreducible characteristic polynomials have been developed. For example, Yu, Liu, Zhang, and Heng \cite{YLZH} give constructions based on Eisenstein's criterion and field extensions.  The problem becomes more challenging when attention is limited to trees. Akbari, Kumar, Mohar and Pragada conjectured that infinitely many irreducible trees exists. In fact, they even conjectured that for any odd $n \ge 7$, the graph obtained by by attaching a pendant edge to some vertex of the path of length $n$  is irreducible.
The question was raised during in the talk given by the first author in CanaDAM 2025. 

Although it seems plausible that many (perhaps even most) trees are irreducible, constructing an explicit infinite family of such trees is far from obviuos. In this paper, we provide such a family, thereby answering the question of Akbari, Kumar, Mohar and Pragada affirmatively.

Let us briefly mention some ideas of our approach. One of the main ingredients in our proof is a theorem of Gross, Hironaka, and McMullen \cite{GHM} concerning the cyclotomic factors of the Coxeter polynomials associated with the diagrams $E_n$. The characteristic polynomials of these graphs turn out to be closely related to their Coxeter polynomials. We combine this result with additional algebraic arguments relating the irreducibility of a polynomial $p(t)$ to that of $p(t^2)$. Together, these tools allow us to prove the irreducibility of the characteristic polynomials of infinitely many trees whose orders lie in certain arithmetic progressions.

\medskip

Let us continue with stating the main result of this note.
For $n \ge 3$ and $ 1 \le k \le n-1$, denote by $G_{n, k}$ the tree with the vertex set $\{ 1, \dots, n \}$ 
obtained from the path on the vertices $\{1,2,\dots,n-1 \}$ by attaching one extra leaf $v$ to the vertex $k$. We prove the following result.

\begin{figure}
\begin{tikzpicture}

  \fill (0,0) circle (2pt) node[below] {1};
  \fill (1.5,0) circle (2pt) node[below] {2};
  \fill (3,0) circle (2pt) node[below] {3};
  \fill (4.5,0) circle (2pt) node[below] {4};
  \fill (6,0) circle (2pt) node[below] {5};
  \fill (7.5,0) circle (2pt) node[below] {6};
  \fill (9,0) circle (2pt) node[below] {7};
  \fill (3,1.5) circle (2pt) node[left] {8};

  \draw (0,0) -- (1.5,0) -- (3,0) -- (4.5,0) -- (6,0) -- (7.5,0) -- (9,0);
  \draw (3,0) -- (3,1.5);

\end{tikzpicture}
\caption{The tree \(G_{8,3}\).}
\label{fig:G83}
\end{figure}

\begin{theorem}\label{main}
For $n > 8$ satisfies $n$ is congruent to one of the numbers $2,10,16,20, 22, 26, 28$ mod $30$. 
Denote the the characteristic polynomial of $G_{n,3}$ by $F_n(x)$. Then $F_n(x)$ is irreducible over $\QQ$. In particular, there are infinitely many pairwise non-isomorphic trees with an irreducible  characteristic polynomial.
\end{theorem}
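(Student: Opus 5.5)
We start by computing $F_n$ explicitly. The tree $G_{n,3}$ is the $E_n$ Dynkin diagram: arms of lengths $2$ (vertices $1,2$), $1$ (the leaf $v$) and $n-4$ (vertices $4,\dots,n-1$) attached at vertex $3$. We substitute $x = t + t^{-1}$. Then the path $P_m$ has $\chi_{P_m}(x) = (t^{m+1}-t^{-(m+1)})/(t-t^{-1})$, and the standard deletion formulas give a closed form
\[
t^{n}(t^2-1)\,F_n(t+t^{-1}) \;=\; t^{n+1}\,(t^3-t-1)\;+\;(t^3+t^2-1)
\]
(up to sign and normalization, which I would fix by checking small $n$). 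By McMullen's observation, the right-hand side is essentially the Coxeter polynomial $E_n(t^2)$-type expression. More precisely, the Coxeter polynomial of $E_n$ satisfies $E_n(t^2)=\pm t^{n}F_n(t+t^{-1})$, since $E_n$ is bipartite. The theorem of Gross--Hironaka--McMullen \cite{GHM} says that $E_n(y)$ is the product of a Salem (or Pisot-type) irreducible factor and cyclotomic factors, and the cyclotomic factors occur only for $n$ in explicit residue classes. In particular, a cyclotomic factor $\Phi_d$ divides $E_n$ only when $n$ satisfies congruences modulo $d\in\{2,3,5,\dots\}$, roughly $n\equiv \pm1 \pmod d$ type conditions governed by $d\mid 360$-style data. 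I would check that, for $n\equiv 2,10,16,20,22,26,28 \pmod{30}$ and $n>8$ (so $n\ge 10$ and we are beyond the finite-type range), no cyclotomic factor appears. Hence $E_n(y)$ is irreducible, and it is a Salem polynomial of degree $n$.

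The second step passes from $E_n(y)$ to $E_n(t^2)$ and then to $F_n$. The map $x\mapsto t+t^{-1}$ sends irreducible reciprocal polynomials of degree $2m$ in $t$ to irreducible polynomials of degree $m$ in $x$. Conversely, a factorization $F_n=gh$ lifts to a factorization of $t^nF_n(t+t^{-1})=E_n(t^2)$ into reciprocal factors. So it suffices to show that $E_n(t^2)$ is irreducible. This reduces to the classical criterion: if $p(y)$ is irreducible of even degree $n$ with root $\lambda$, then $p(t^2)$ is reducible iff $\lambda$ is a square in $\QQ(\lambda)$. In that case $p(t^2)=\pm q(t)q(-t)$ and hence $p(0)\cdot(\pm1)$ is a square, along with further constraints. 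Since $E_n$ is monic with constant term $\pm1$, the constant-term test alone will not suffice. Instead, I would argue with the Salem root: if $\lambda=\mu^2$ with $\mu\in\QQ(\lambda)$, then $\pm\mu$ is a Salem number of degree $n$ whose square is $\lambda$, and $q(t)$ is its minimal polynomial. I would then compare $q(t)q(-t)$ with the explicit sparse form $t^{n+1}(t^3-t-1)+(t^3+t^2-1)$ (times $(t^2-1)^{-1}$).

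To rule this out I would reduce modulo small primes. The main idea is to reduce $P_n(t)=t^{n+1}(t^3-t-1)+t^3+t^2-1$ modulo $2$ and modulo $3$ (and perhaps $5$). A factorization $q(t)q(-t)$ forces modulo $2$ a square: the reduction of $E_n(t^2)$ mod 2 is $E_n(t)^2$, which is automatic, so mod 2 gives nothing, and I would use $3$ and $5$ instead. Modulo $p$ odd, $\bar q(t)\bar q(-t)$ means that the multiset of irreducible factors of $\bar P_n$ is invariant under $t\mapsto -t$ with a pairing. Moreover, $\bar E_n(y)$ must factor mod $p$ with every irreducible factor $f(y)$ having $f(t^2)$ reducible, i.e. with roots that are squares in $\mathbb F_{p^{\deg f}}$. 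The order of $t$ modulo factors of $t^3-t-1$ and $t^3+t^2-1$ depends on $n$ modulo small periods; periods dividing $30$ (for example, $t^3-t-1$ is irreducible mod $3$ with roots of order $13$, so other primes may be needed) explain the modulus $30$. For each listed residue class, I expect to find a prime $p\in\{2,3,5\}$ (working with $E_n$ itself, or via discriminant/quadratic-character considerations such as $\operatorname{disc}$ or the Legendre symbol of the norm of $\lambda$) for which some factor of $\bar E_n$ has a nonsquare root. This contradicts $\lambda=\mu^2$.

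The main obstacle is this last step: finding a uniform obstruction to $\lambda$ being a square in $\QQ(\lambda)$ that holds exactly on the given residue classes. The first thing I would try is the norm argument. $N(\lambda)=E_n(0)=\pm1$ must equal $N(\mu)^2=1$, which gives a sign condition on $n$, and I would check whether this already eliminates half the classes. For the rest I would look for a Frobenius-type computation modulo $3$ and $5$, in which $t^{n}$ mod the small cubic factors is periodic with period dividing $30$.
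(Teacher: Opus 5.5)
Your reduction coincides with the paper's: the identity $E_n(t^2)=t^nF_n(t+t^{-1})$, irreducibility of $E_n$ from Gross--Hironaka--McMullen (their Corollary 1.4 can simply be cited), lifting a factorization of $F_n$ to one of $E_n(t^2)$, and the dichotomy that $E_n(t^2)$ is either irreducible or equal to $q(t)q(-t)$.

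Your displayed closed form is wrong. The correct identity is $t^n(t^2-1)F_n(t+t^{-1})=t^{2n-4}(t^6-t^2-1)+t^6+t^4-1$, i.e.\ $(y-1)E_n(y)$ evaluated at $y=t^2$. Your right-hand side has degree $n+4$ instead of $2n+2$, so the polynomial you propose to reduce modulo primes is not the right one.

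The genuine gap is the step you flag as the main obstacle: you never exclude $E_n(t^2)=q(t)q(-t)$, and none of the tools you list does so.
\begin{itemize}
\item The norm test is vacuous. Every $n$ in the listed classes is even, and $N(\lambda)=E_n(0)=1$ is already a square.
\item The Salem viewpoint is inconclusive. The square of a Salem number is again a Salem number of the same degree, so excluding $\sqrt{\lambda_n}<1.16$ would be an instance of Lehmer's problem.
\item The mod-$p$ plan is speculative. An irreducible factor $f$ of $\bar E_n$ with $f(t^2)$ irreducible contradicts $\bar q(t)\bar q(-t)$ only when $f$ occurs in $\bar E_n$ with odd multiplicity, which you would have to control uniformly in $n$. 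There is also no reason for the resulting conditions to be periodic modulo $30$.
\end{itemize}
In fact the modulus $30$ comes entirely from the cyclotomic-factor analysis of $E_n$, not from the squareness question.

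What is missing is a uniform $2$-adic coefficient comparison, valid for every even $n$. Write $q(t)=A(t^2)+tB(t^2)$ with $A$ monic of degree $m=n/2$ and $\deg B\le m-1$, so that $E_n(y)=A(y)^2-yB(y)^2$. The coefficients of $y^{n-1}$ and $y^{n-2}$ in $E_n(y)$ are $1$ and $0$, so
\[
2a_{m-1}-b_{m-1}^2=1,\qquad a_{m-1}^2+2a_{m-2}-2b_{m-1}b_{m-2}=0 .
\]
The first equation forces $b_{m-1}$ odd, hence $a_{m-1}=(1+b_{m-1}^2)/2$ is odd. The second forces $a_{m-1}$ even, a contradiction. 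You were right that reduction modulo $2$ alone yields nothing, but abandoning the prime $2$ was the wrong move: the obstruction appears modulo $4$.
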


We will start by recalling some basic facts about Chebyshev polynomials that will turn out to be useful in describing the characteristic polynomial 
of $G_{n,3}$.

\section{General facts about Chebyshev polynomials of the second kind}
Recall that the Chebyshev polynomials of the second kind form a family of polynomials $U_m(x)$, indexed by $m=0,1,2,\ldots$, and defined by the initial conditions

\[
U_0(x)=1,\qquad U_1(x)=2x,
\]
and then inductively for $m\ge 2$ by
\[
U_m(x)=2x U_{m-1}(x)-U_{m-2}(x).
\]

For instance, $U_2(x)=4x^2-1$. 
We will use two other related forms of Chebyshev polynomials. It is not difficult to see that for each $m \ge 1$, we have
 $$U_m(\cos \theta)=  \frac{\sin(m+1)\theta}{\sin \theta}.$$
Writing $z= \cos \theta+ i \sin \theta$, we can express this property as
\[  U_m  \left( \frac{z+z^{-1}}{2} \right) = \frac{z^{m+1}- z^{- (m+1)} }{z- z^{-1}}. \]

This representation will be used later. We now reformulate the recurrence relation in a way that will allow us to linearize several expressions involving Chebyshev polynomials that arise in the sequel.

\begin{proposition}\label{producttosum} For all $m \ge 1$ we have
\[ t U_m \left(\frac {t}{2}\right) = U_{m+1} \left(\frac {t}{2}\right) + U_{m-1} \left(\frac {t}{2}\right),
\]
\end{proposition}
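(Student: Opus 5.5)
The identity is just the defining three-term recurrence evaluated at $x = t/2$, shifted by one index. I would substitute $x = t/2$ into
\[
U_{m+1}(x) = 2x\,U_m(x) - U_{m-1}(x), \qquad m \ge 1,
\]
which is the recurrence with $m+1 \ge 2$ in place of $m$. Since $2x = t$, this gives
\[
U_{m+1}\left(\tfrac{t}{2}\right) = t\,U_m\left(\tfrac{t}{2}\right) - U_{m-1}\left(\tfrac{t}{2}\right),
\]
and moving the last term to the left-hand side yields exactly the claimed identity.

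The only point to check is the range of indices. The recurrence is stated for indices $\ge 2$, and $m \ge 1$ gives $m+1 \ge 2$, so the substitution is valid for every $m$ in the statement. As a sanity check, the case $m=1$ reads $t \cdot t = (t^2 - 1) + 1$, using $U_1(t/2) = t$ and $U_2(t/2) = t^2 - 1$, which is correct.

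There is no real obstacle here. The identity is a direct rewriting of the recurrence, and it is what will later let products of the form $t\,U_m(t/2)$ be written as sums of Chebyshev polynomials.

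Alternatively, one could verify it through the representation $U_m\left(\frac{z+z^{-1}}{2}\right) = \frac{z^{m+1}-z^{-(m+1)}}{z-z^{-1}}$ with $t = z + z^{-1}$. Multiplying $z^{m+1} - z^{-(m+1)}$ by $z + z^{-1}$ gives $(z^{m+2} - z^{-(m+2)}) + (z^{m} - z^{-m})$. Dividing by $z - z^{-1}$ gives the identity for infinitely many values of $t$, and hence as a polynomial identity.
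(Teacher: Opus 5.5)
Your proposal is correct and matches the paper's proof: both simply write the defining recurrence at index $m+1$ (valid since $m+1 \ge 2$) with $x = t/2$, so that $2x = t$, and rearrange. Your index check, the $m=1$ sanity check, and the alternative verification via $t = z + z^{-1}$ are all fine.
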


\begin{proof}
This is precisely the recursive formula written for $m+1$ and $x= t/2$. 
\end{proof}

\begin{remark}\label{special}
We will make repeated use of this identity. For instance, applying it successively yields
\[ t^2 U_m \left(\frac {t}{2}\right) = t  \left(  U_{m+1} \left(\frac {t}{2}\right) + U_{m-1} \left(\frac {t}{2}\right) \right)=
 U_{m+2} \left(\frac {t}{2}\right) + 2 U_{m} \left(\frac {t}{2}\right)+  U_{m-2} \left(\frac {t}{2}\right).
 \]
\end{remark}

\section{Characteristic polynomials}
\begin{proposition}\label{char:path}
The characteristic polynomial of the path $P_m$ on $m$ vertices is given by 
\[
\chi_{P_m}(t)=U_m\!\left(\frac {t}{2}\right).
\]
\end{proposition}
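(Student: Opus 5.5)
We prove the statement by induction on $m$, expanding the characteristic polynomial of the path along its end vertex. Write $\chi_m(t)=\det(tI-A(P_m))$.

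\textbf{Base cases.} We have $\chi_0(t)=1=U_0(t/2)$, with the convention that the empty determinant equals $1$. We also have $\chi_1(t)=t=2\cdot\frac{t}{2}=U_1(t/2)$. For completeness, $\chi_2(t)=t^2-1=4(t/2)^2-1=U_2(t/2)$.

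\textbf{Recurrence.} Label the vertices of $P_m$ as $1,\dots,m$ along the path, so that $tI-A(P_m)$ is tridiagonal, with $t$ on the diagonal and $-1$ on the off-diagonals. We apply Laplace expansion along the last row.
\begin{itemize}
\item The diagonal entry $t$ multiplies the minor obtained by deleting vertex $m$. That minor is $\chi_{m-1}(t)$.
\item The entry $-1$ in position $(m,m-1)$ multiplies $(-1)^{2m-1}$ times the minor obtained by deleting row $m$ and column $m-1$. That minor has last column $(0,\dots,0,-1)^T$. Expanding it along this column gives $-\chi_{m-2}(t)$.
\end{itemize}
The total contribution of the second term is therefore $(-1)\cdot(-1)\cdot(-\chi_{m-2})=-\chi_{m-2}(t)$. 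Hence
\[
\chi_m(t)=t\,\chi_{m-1}(t)-\chi_{m-2}(t)\qquad (m\ge 2).
\]
Equivalently, one can use the standard rule for a pendant vertex $v$ with neighbour $u$: $\chi_G=t\,\chi_{G-v}-\chi_{G-v-u}$. This avoids tracking the signs by hand.

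\textbf{Comparison.} Setting $x=t/2$ in the Chebyshev recurrence gives
\[
U_m(t/2)=t\,U_{m-1}(t/2)-U_{m-2}(t/2).
\]
This is the same recurrence, and the initial values agree, so induction gives $\chi_{P_m}(t)=U_m(t/2)$ for all $m$.

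The only step requiring care is the sign bookkeeping in the cofactor expansion. There is no real obstacle.
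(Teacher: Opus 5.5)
Your proof is correct. The base cases match $U_0=1$ and $U_1=2x$. The cofactor signs check out: the off-diagonal term contributes $(-1)^{2m-1}\cdot(-1)\cdot(-\chi_{m-2})=-\chi_{m-2}$. The resulting recurrence $\chi_m=t\chi_{m-1}-\chi_{m-2}$ is exactly the Chebyshev recurrence at $x=t/2$.

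The paper gives no argument here. It calls the fact well known and cites \cite[p.47]{CDRPS}, so your induction is a self-contained substitute for that citation rather than a competing route. The citation keeps the note short, while your version makes it independent of the reference at the cost of a few lines of determinant bookkeeping.

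The pendant-vertex rule $\chi_G=t\chi_{G-v}-\chi_{G-v-u}$, which you mention as an alternative, is exactly the tool the paper takes from Biggs in the proof of Proposition~\ref{char:tree}. Using it here, with $v$ an end vertex of $P_m$ and $u$ its neighbour, would make the two derivations uniform and avoid the sign tracking entirely.
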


\begin{proof}
This is well known. See, for instance \cite[p.47]{CDRPS}.
\end{proof}

\begin{proposition}\label{char:tree}
Let $G:=G_{n, 3}$ as above. Thus the characteristic polynomial of $G$ is given by
\[
\chi_G(t)
= U_n(t/2)- U_{n-4}(t/2)-  U_{n-6}(t/2). 
\]

\end{proposition}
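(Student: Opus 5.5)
Write $V_m := U_m(t/2)=\chi_{P_m}(t)$ (Proposition~\ref{char:path}), with the conventions $V_0=1$ and $V_{-1}=0$. Note that $V_{-1}=0$ agrees with the recursion of Proposition~\ref{producttosum} at $m=0$: $tV_0=V_1+V_{-1}$.

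The plan is to expand $\chi_G$ along the pendant vertex and then linearize the products that appear.

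\textbf{Step 1 (expansion at the leaf).} Apply the standard leaf-deletion rule to the leaf $v$ attached to vertex $3$. For a pendant vertex $v$ with neighbour $u$,
\[
\chi_G(t)=t\,\chi_{G-v}(t)-\chi_{G-v-u}(t).
\]
Here $G-v=P_{n-1}$. Removing both $v$ and $u=3$ leaves the disjoint union $P_2\cup P_{n-4}$, formed by vertices $\{1,2\}$ and $\{4,\dots,n-1\}$. Since the characteristic polynomial is multiplicative over disjoint unions, this gives
\[
\chi_G(t)=t\,V_{n-1}-V_2V_{n-4}.
\]

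\textbf{Step 2 (linearization).} Use Proposition~\ref{producttosum} to get $tV_{n-1}=V_n+V_{n-2}$. Since $V_2=t^2-1$, Remark~\ref{special} gives
\[
V_2V_{n-4}=t^2V_{n-4}-V_{n-4}=V_{n-2}+2V_{n-4}+V_{n-6}-V_{n-4}=V_{n-2}+V_{n-4}+V_{n-6}.
\]
Subtracting, the $V_{n-2}$ terms cancel, and we obtain
\[
\chi_G(t)=V_n-V_{n-4}-V_{n-6},
\]
which is the claimed formula.

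\textbf{Main obstacle: validity of the indices.} The only delicate point is checking that every index is in range.
\begin{itemize}
\item The computation needs $n-1\ge 1$ for Proposition~\ref{producttosum}.
\item Remark~\ref{special} is applied with $m=n-4$, so its terms need $m-2=n-6\ge -1$ under the convention $V_{-1}=0$. Its derivation invokes Proposition~\ref{producttosum} at $m$ and $m-1$, which holds down to $m-1=0$ by the convention above.
\end{itemize}
This covers $n\ge 5$, which is the range of interest since $n>8$ in Theorem~\ref{main}. The small cases, if desired, can be checked directly.

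Finally, the leaf-deletion rule itself comes from a one-line Laplace expansion of $\det(tI-A)$ along the row of $v$. I would cite it from \cite{CDRPS} rather than reprove it.
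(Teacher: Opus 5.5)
Your proposal is correct and matches the paper's proof step for step. You apply the leaf-deletion recurrence at $v$ to get $\chi_G(t)=tU_{n-1}(t/2)-(t^2-1)U_{n-4}(t/2)$, then linearize with Proposition~\ref{producttosum} and Remark~\ref{special} so the $U_{n-2}$ terms cancel, and your extra index bookkeeping with $U_{-1}=0$ (covering $n\ge 5$) is a harmless refinement the paper leaves implicit.
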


\begin{proof}[Proof of Proposition \ref{char:tree}]
We use the recursive formula \cite[p.17]{Biggs} for the the characteristic polynomial 
\[
\chi_G(t)=t\chi_{G-\{ v\} }(t)-\chi_{G- \{v,  k\} }(t).
\]

Note that $G-\{ v\} \simeq P_{n-1}$ and $G- \{v,  k\} \simeq P_{k-1} \cup P_{n-k-1}$. 
Hence, 
\[
\chi_G(t) = t\chi_{P_{n-1}}(t) - \chi_{P_{k-1}}(t)\chi_{P_{n-k-1}}(t).
\]

Using Proposition \ref{char:path} we deduce
\[
\chi_G(t) = t U_{n-1} \left(\frac{t}{2} \right) - U_{k-1} \left(\frac {t}{2}\right) U_{n-k-1} \left(\frac {t}{2}\right).
\]

Using Proposition \ref{producttosum} and Remark \ref{special}, we can rewrite this as 
\begin{equation}
\begin{split}      
 \chi_{G_{n,3}}(t) &= t U_{n-1}(t/2)- (t^2-1) U_{n-4}(t/2) \\
&= U_n(t/2)+ U_{n-2}(t/2)- (U_{n-2}(t/2)+2 U_{n-4}(t/2)+ U_{n-6}(t/2))+ U_{n-4}(t/2) \\
&=  U_n(t/2)- U_{n-4}(t/2)-  U_{n-6}(t/2). 
\end{split}
\end{equation}
\end{proof}

Theorem \ref{main} will now follow from the following theorem. 

\begin{theorem}\label{irreducibility}
For $n > 8$ satisfies $n$ is congruent to one of the numbers $2,10,16,20, 22, 26, 28$ mod $30$. 
Consider the polynomials of degree $n$ given by
\[ 
F_n(x):=U_n(x)-U_{n-4}(x)-U_{n-6}(x)
\]
Then $F_n(x)$ is irreducible over $\QQ$.
\end{theorem}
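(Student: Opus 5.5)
We plan to pass from $F_n$ to a Coxeter-type polynomial through the substitution $x=(z+z^{-1})/2$, and then argue in three steps.

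\textbf{Step 1: the identity.} By the representation of $U_m\bigl(\tfrac{z+z^{-1}}{2}\bigr)$ recalled earlier,
\[
(z-z^{-1})\,F_n\!\left(\tfrac{z+z^{-1}}{2}\right)=z^{n+1}-z^{-(n+1)}-z^{n-3}+z^{-(n-3)}-z^{n-5}+z^{-(n-5)}.
\]
Multiplying by $z^{n+1}$ and writing $w=z^2$, the right-hand side becomes
\[
P(w)=w^{n+1}-w^{n-1}-w^{n-2}+w^3+w^2-1=w^{n-2}(w^3-w-1)+(w^3+w^2-1).
\]
The quotient $E_n(w):=P(w)/(w-1)$ is the Coxeter polynomial of the diagram $E_n$. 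This gives the key identity
\[
z^nF_n\!\left(\tfrac{z+z^{-1}}{2}\right)=E_n(z^2).
\]
Now suppose $F_n=gh$ is a nontrivial factorization in $\QQ[x]$. Then $z^{\deg g}g\bigl(\tfrac{z+z^{-1}}{2}\bigr)$ and $z^{\deg h}h\bigl(\tfrac{z+z^{-1}}{2}\bigr)$ give a nontrivial factorization of $E_n(z^2)$. So it suffices to show that $E_n(z^2)$ is irreducible in $\QQ[z]$.

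\textbf{Step 2: removing cyclotomic factors.} By Gross--Hironaka--McMullen \cite{GHM}, $E_n(w)=C_n(w)S_n(w)$, where $S_n$ is an irreducible Salem polynomial (for $n\ge 10$). The factor $C_n$ is a product of cyclotomic polynomials, and which cyclotomic polynomials divide $E_n$ is governed by congruence conditions on $n$, through the orders of roots of unity involved, which divide $2,3,5$-type periods. We will read off from their list that for $n$ in the stated residues mod $30$ (all even), $C_n=1$. As sanity checks, a root of unity $\zeta$ is a root of $P$ exactly when $\zeta^{n-2}(\zeta^3-\zeta-1)=-(\zeta^3+\zeta^2-1)$. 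Also, for $n$ even one computes $E_n(-1)=1$ and $E_n(1)=P'(1)=9-n$, so for instance $w=\pm1$ are never roots here. Hence $E_n(w)$ is irreducible of degree $n$, with a Salem root $\lambda>1$.

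\textbf{Step 3: irreducibility of $E_n(z^2)$.} By Capelli's theorem, $E_n(z^2)$ is reducible if and only if $\lambda=\mu^2$ for some $\mu\in\QQ(\lambda)$, equivalently $E_n(z^2)=\pm g(z)g(-z)$ with $g\in\ZZ[z]$ of degree $n$. Evaluating at $z=1$ and at $z=i$ gives the necessary conditions
\[
9-n=E_n(1)=\pm g(1)g(-1),\qquad 1=E_n(-1)=\pm g(i)g(-i)=\pm N_{\QQ(i)/\QQ}(g(i)).
\]
These alone are probably too weak, so the main tool will be reduction modulo the primes $p=2,3,5$. Since $z\mapsto -z$ permutes the factors, the factorization $\pm g(z)g(-z)$ forces a strong symmetry in the factorization of $E_n(z^2)\bmod p$. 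The factorization type of $P(w)\bmod p$ depends on $n$ only through periodicity: $w^{n}$ is periodic in $n$ modulo the orders of the relevant roots in finite fields. We expect these periods to combine into the modulus $30$, and we will check that for the listed residues some prime forbids the pattern $g(z)g(-z)$. For example, this happens if $E_n\bmod p$ has an irreducible factor $q(w)$ of degree $d$ for which $q(z^2)$ is irreducible of degree $2d$ over $\mathbb F_p$: that factor cannot be split between $g(z)$ and $g(-z)$, yet it is not shared symmetrically by both.

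\textbf{Main obstacle.} We expect Step 3 to be the hardest part: exhibiting, uniformly across each residue class mod $30$, a prime and a factor witnessing that $\lambda$ is not a square in $\QQ(\lambda)$. If the mod-$p$ symmetry argument is insufficient, we will instead try to show that $-\lambda$ or $\lambda$ cannot be a norm or square by using the explicit values $E_n(\pm1)$ combined with $2$-adic information.
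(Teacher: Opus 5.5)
Your Steps 1 and 2 follow the paper's route: the identity $E_n(z^2)=z^nF_n((z+z^{-1})/2)$, Corollary 1.4 of Gross--Hironaka--McMullen, and a Capelli-type reduction that leaves only the case $E_n(z^2)=g(z)g(-z)$ with $g\in\mathbb{Z}[z]$ monic of degree $n$. Step 3, which must exclude that case, is never carried out, and the mechanism you sketch would not deliver it.

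\begin{itemize}
\item Modulo $2$ your criterion can never apply. Over $\mathbb{F}_2$ one has $q(z^2)=q(z)^2$ for every $q$, so $E_n(z^2)\bmod 2$ always has the shape $\bar g(z)\bar g(-z)$. The real obstruction lives modulo $4$, which factorization patterns mod $p$ cannot detect.
\item For odd $p$, the factorization of $E_n \bmod p$ is not periodic in $n$ with period $30$. Whether a given $\alpha\in\mathbb{F}_{p^d}$ is a root of $w^{n-2}(w^3-w-1)+w^3+w^2-1$ depends on $n$ modulo the order of $\alpha$, a divisor of $p^d-1$ (for example of $8$ or $24$ when $d=2$ and $p=3,5$). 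Nothing bounds the relevant $d$, and you give no argument that finitely many such witnesses cover every $n$ in the listed classes.
\item Even the simplest witnesses are absent. For $n$ even, the non-square $-1\in\mathbb{F}_3$ gives $P(-1)=-2\not\equiv 0 \pmod 3$. The non-squares $2,3\in\mathbb{F}_5$ give $P(2)\equiv 1$ and $P(3)\equiv 3^{n-1}\not\equiv 0 \pmod 5$.
\item Your evaluations at $z=1$ and $z=i$ give no obstruction either. They only say that the odd number $9-n$ is a difference of two squares and that $1$ is a sum of two squares, both of which always hold.
\end{itemize}

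The missing idea is the paper's elementary comparison of top coefficients modulo $4$. Write $g(z)=A(z^2)+zB(z^2)$ with $A$ monic of degree $m=n/2$ and $\deg B\le m-1$, so that $E_n(y)=A(y)^2-yB(y)^2$. From
\[
E_n(y)=\sum_{j=0}^{n}y^j-\sum_{j=0}^{n-2}y^j-\sum_{j=0}^{n-3}y^j+(1+y+y^2)+(1+y),
\]
the coefficients of $y^{n-1}$ and $y^{n-2}$ are $1$ and $0$. Comparing them gives
\[
2a_{m-1}-b_{m-1}^2=1, \qquad a_{m-1}^2+2a_{m-2}-2b_{m-1}b_{m-2}=0.
\]
The first equation forces $b_{m-1}$ odd, hence $b_{m-1}^2\equiv 1\pmod 8$ and $a_{m-1}\equiv 1\pmod 4$. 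The second forces $a_{m-1}$ even, a contradiction. This handles every even $n$ at once, with no case analysis over residue classes. Your fallback to ``$2$-adic information'' points in the right direction, but the obstruction is carried by the top coefficients of $E_n$, not by its values at $\pm 1$.
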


A major step toward proving our theorem is already contained in the work of Gross, Hironaka, and McMullen \cite{GHM}. Nevertheless, some additional arguments are required to deduce our result from theirs. In their paper, Gross, Hironaka, and McMullen study the Coxeter polynomials associated with the so-called $E_n$ diagrams. The precise definition of these diagrams is
not relevant for us. They are explicitly given by
\[ E_n(x)= \frac{x^{n-2}Q(x)+R(x)}{x-1} \]
where
$Q(x)= x^3-x-1$ and $R(x)=x^3+x^2-1$; see \cite[p.1035]{GHM}. In particular, they prove that

\begin{theorem}[Gross-Hironaka-McMullen, Corollary 1.4]\label{GHM}
If $n>8$ is congruent to one of $2,10,16,20, 22, 26, 28$ mod $30$ then $E_n(x)$ is irreducible over $\QQ$. 
\end{theorem}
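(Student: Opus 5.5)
The plan is to exploit the structure of $E_n$ as a Salem-type polynomial. This reduces irreducibility to ruling out cyclotomic factors, and that in turn reduces to a classification of short vanishing sums of roots of unity.

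\textbf{Step 1: reciprocity and the factor $x-1$.} Put $P_n(x)=x^{n-2}Q(x)+R(x)$. Note that $x^3Q(1/x)=1-x^2-x^3=-R(x)$. Hence $x^{n+1}P_n(1/x)=-P_n(x)$, so $P_n$ is anti-reciprocal of degree $n+1$. Then $E_n=P_n/(x-1)$ is a reciprocal polynomial of degree $n$; it is a polynomial because $P_n(1)=Q(1)+R(1)=0$. Next compute $E_n(1)=P_n'(1)=(n-2)Q(1)+Q'(1)+R'(1)=-(n-2)+2+5=9-n$. This is nonzero for $n>9$, and $n=9$ is excluded by the congruence conditions. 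So $1$ is not a root of $E_n$. Similarly $E_n(-1)\neq 0$ for the relevant $n$ (a direct evaluation depending on the parity of $n$, which is even here).

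\textbf{Step 2: exactly one root outside the unit circle.} On $|x|=1+\varepsilon$ I would compare $P_n$ with its dominant term $x^{n-2}Q(x)$ by a Rouché-type argument, or equivalently use the standard ``$x^{m}A(x)\pm A^*(x)$'' lemma for Salem polynomials. Here $Q$ has exactly one root outside the unit disk, the plastic number $\approx1.3247$, and no roots on the circle. The conclusion is that $P_n$, hence $E_n$, has exactly one root $\lambda_n$ with $|\lambda_n|>1$; by reciprocity it also has $1/\lambda_n$, and all other roots lie on $|x|=1$. Consequently $E_n=S_n\cdot C_n$, where $S_n$ is the minimal polynomial of a Salem number and $C_n$ is a product of cyclotomic polynomials (Kronecker's theorem). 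Irreducibility is therefore equivalent to $C_n=1$.

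\textbf{Step 3: classifying possible cyclotomic roots (main obstacle).} Let $\zeta\ne1$ be a root of unity with $P_n(\zeta)=0$. Then $\zeta^{n-2}=-R(\zeta)/Q(\zeta)$. Setting $\omega=\zeta^{n-2}$ gives
\[
\zeta^3-\zeta-1+\omega(\zeta^3+\zeta^2-1)=0,
\]
a vanishing sum of at most six roots of unity. By the Conway--Jones/Mann classification, after normalizing, every minimal vanishing subsum involves only roots of unity of order dividing $2\cdot3\cdot5=30$, with very few exceptional shapes for length $\le6$. Running through the possible decompositions into minimal subsums yields a finite explicit list of admissible pairs $(\zeta,\omega)$; I expect the orders of $\zeta$ to lie in a small set of divisors of a modest multiple of $30$. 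For each such $\zeta$ of order $d$, the equation $\zeta^{n-2}=\omega$ holds exactly for $n$ in a fixed residue class modulo $d$. This case analysis is the heart of the proof and the step I expect to be most laborious.

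\textbf{Step 4: sieving by congruences.} Finally, I would collect the residue classes of $n$ for which some admissible $\zeta$ is a root. The plan is to show that none of these classes meets $n\equiv 2,10,16,20,22,26,28 \pmod{30}$ when $n>8$. This would give $C_n=1$, so $E_n=S_n$ is irreducible.
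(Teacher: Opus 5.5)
The paper gives no proof of this statement; it quotes it from Gross--Hironaka--McMullen, who write $E_n$ as a Salem factor times a cyclotomic factor and then determine the cyclotomic factor explicitly. Your outline has that same architecture, and Steps 1 and 2 are correct: $E_n$ is reciprocal, $E_n(1)=9-n$, and $E_n(-1)=1$ for even $n$. For the root count, note that $|x^{n-2}Q(x)|=|R(x)|$ on $|x|=1$, so the clean argument applies Rouch\'e to $x^{n-2}Q(x)+tR(x)$ with $0<t<1$ and lets $t\uparrow 1$. But these steps use nothing about $n$ beyond $n\geq 10$. The whole content of the statement is why exactly the residues $2,10,16,20,22,26,28 \pmod{30}$ carry no cyclotomic factor, and that lies in Steps 3 and 4, which you only announce (``I expect'', ``the plan is to show''). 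As it stands the proposal is a reduction, not a proof.

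Step 3 is also set up incorrectly. From $\zeta^{n-2}=-R(\zeta)/Q(\zeta)$ and $\omega=\zeta^{n-2}$ the relation is $\omega Q(\zeta)+R(\zeta)=0$, not $Q(\zeta)+\omega R(\zeta)=0$. Your version replaces $\omega$ by $\omega^{-1}$, which turns every true condition $n\equiv n_0 \pmod d$ into $n\equiv 4-n_0\pmod d$. You would then ``find'' $\Phi_{30}\mid E_n$ for $n\equiv 26\pmod{30}$ and $\Phi_5\mid E_n$ for $5\mid n$, and Step 4 would fail for $n\equiv 10,20,26$. Also, Mann's bound applies to ratios within a minimal subsum, not to $\zeta$ itself. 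Since $\zeta$ enters through $\zeta^2$, $\zeta^3$ and signs, orders not dividing $30$ do occur.

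With the correct relation the case analysis is short. Four roots of unity summing to zero always cancel in pairs, so the six terms $\omega\zeta^3,-\omega\zeta,-\omega,\zeta^3,\zeta^2,-1$ split in one of three ways: three cancelling pairs, two rotated copies of $1+\rho+\rho^2$, or one rotated copy of $\eta+\eta^2+\eta^3+\eta^4-\rho-\rho^2$. Here $\rho$ and $\eta$ are primitive cube and fifth roots of unity. Working through these cases gives the following: for $d>1$, $\Phi_d\mid E_n$ if and only if $(d,\,n\bmod d)$ is one of $(2,1),(3,0),(5,4),(8,5),(12,6),(18,7),(30,8)$. Each case is confirmed by direct substitution; e.g.\ $\zeta^{n-2}=\zeta^2$ gives $P_n(\zeta)=\zeta^5-1$. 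For even $n$ only $\Phi_3,\Phi_5,\Phi_{12},\Phi_{30}$ remain. These are excluded exactly by $3\nmid n$, $n\not\equiv 4\pmod 5$ and $n\not\equiv 8\pmod{30}$, which is precisely the residue list in the statement. Supplying this enumeration is what your proof is missing.
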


The connection between  these two families of polynomials is stated in the next lemma. 

\begin{lemma}\label{EF}
For all values of $n \ge 2$ we have
\[ E_n(z^2)= z^{n}F_n \left(\frac{z+z^{-1}}{2}\right) \]
\end{lemma}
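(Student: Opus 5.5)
Our plan is to substitute the closed form of the Chebyshev polynomials from Section 2 directly into $F_n$ and compare with the explicit formula for $E_n$. With $x=(z+z^{-1})/2$, the identity
\[ U_m\!\left(\frac{z+z^{-1}}{2}\right)=\frac{z^{m+1}-z^{-(m+1)}}{z-z^{-1}} \]
holds for all $m\ge 0$; this follows from the recurrence, since both sides satisfy $U_m=2xU_{m-1}-U_{m-2}$ with the right initial values. For $m=-1$ we interpret the right side as $0$, which is consistent with the recurrence. Hence, for $n\ge 6$,
\[ (z-z^{-1})\,F_n\!\left(\frac{z+z^{-1}}{2}\right)= z^{n+1}-z^{-(n+1)} - z^{n-3}+z^{-(n-3)} - z^{n-5}+z^{-(n-5)}. \]
For $n\in\{2,\dots,5\}$ the terms $U_{n-4},U_{n-6}$ have negative index. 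For these small $n$ we either adopt the natural convention $U_{-1}=0$, $U_{-m}=-U_{m-2}$, which is forced by running the recurrence backwards and under which the displayed formula persists, or else check these cases by hand.

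Next we multiply through by $z^{n}\cdot z$ to clear denominators. Since $z(z-z^{-1})=z^2-1$, we obtain
\[ (z^2-1)\,z^{n}F_n\!\left(\frac{z+z^{-1}}{2}\right)= z^{2n+2}-z^{2n-2}-z^{2n-4}+z^{4}+z^{6}-1. \]
Writing $y=z^2$, the right-hand side becomes
\[ y^{n+1}-y^{n-1}-y^{n-2}+y^3+y^2-1 = y^{n-2}(y^3-y-1)+(y^3+y^2-1) = y^{n-2}Q(y)+R(y). \]
Dividing by $y-1=z^2-1$ then gives exactly $E_n(z^2)$. The identity holds as rational functions in $z$, and therefore as a polynomial (Laurent) identity.

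The only delicate points are bookkeeping ones: getting the exponents and signs right when expanding $z^{n+1}(z^{\pm(m+1)})$, and justifying the Chebyshev formula for all indices that appear, especially $m=n-6$ when $n<6$. We expect the latter, the small-$n$ and negative-index conventions, to be the main (minor) obstacle. We would handle it by defining $U_m$ for negative $m$ through the backward recurrence and noting that the closed form remains valid.
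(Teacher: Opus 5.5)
Your proposal is correct and follows the paper's proof essentially step for step. You substitute $U_m\bigl(\tfrac{z+z^{-1}}{2}\bigr)=\tfrac{z^{m+1}-z^{-(m+1)}}{z-z^{-1}}$, clear denominators, and recognize $y^{n-2}Q(y)+R(y)$ with $y=z^2$; you multiply by $(z^2-1)z^{n}$ directly, while the paper multiplies by $(z-z^{-1})z^{n+5}$ and then cancels $z^{4}$. Your extension of $U_m$ to negative indices via the backward recurrence, to cover $2\le n\le 5$, handles correctly a point the paper passes over silently.
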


\begin{proof} This is a simple verification that we will include the details for completeness. 
Using the properties of Chebyshev polynomials discussed before, we can write 
\begin{equation}
\begin{split}      
F_n \left(\frac{z+z^{-1}}{2}\right) &= U_n \left(\frac{z+z^{-1}}{2}\right)- U_{n-4} \left(\frac{z+z^{-1}}{2}\right) - U_{n-6} \left(\frac{z+z^{-1}}{2}\right) \\
&=\frac{1}{z-z^{-1}} \Bigl[(z^{n+1}-z^{-(n+1)})-(z^{n-3}-z^{-(n-3)})-(z^{n-5}-z^{-(n-5)})\Bigr].
\end{split}
\end{equation}

Multiply by $(z-z^{-1})z^{n+5}$ gives
 
\begin{equation}
\begin{split}      
(z-z^{-1})z^{n+5}F_n \left(\frac{z+z^{-1}}{2}\right) &=z^{2n+6}-z^4-z^{2n+2}+z^8-z^{2n}+z^{10}\\
&=z^4\Bigl(z^{2n+2}-z^{2n-2}-z^{2n-4}+z^6+z^4-1\Bigr) \\
&= z^4   \left( (z^2)^{(n-2)} (z^6 -z^2-1)+ z^6 +z^4- 1 \right) \\
 &= z^4 ( (z^2)^{(n-2)} Q(z^2)+ R(z^2)). 
\end{split}
\end{equation}
 
Dividing by $z^4$ we deduce
\[
 (z^2-1) z^n F_n \left(\frac{z+z^{-1}}{2}\right)=  (z^2)^{(n-2)} Q(z^2)+ R(z^2) = (z^2-1) E_n (z^2). \]
which proves the claim.

\end{proof}

Note that in general the irreducibility of $f(z) \in \QQ[z]$ does not imply the irreducibility of $f(z^2)$; take, for instance, $f(z)=z-4$. However, we will show that $E(z^2)$ is also irreducible over $\QQ$.
\begin{theorem}\label{ez2}
If $n>8$ is congruent to one of $2,10,16,20, 22, 26, 28$ mod $30$ then $E_n(z^2)$ is irreducible over $\QQ$. 
\end{theorem}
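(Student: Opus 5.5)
The plan is to use Capelli's criterion: for $f\in\QQ[x]$ irreducible with a root $\alpha$, the polynomial $f(x^2)$ is irreducible over $\QQ$ if and only if $\alpha$ is not a square in $K=\QQ(\alpha)$. By Theorem~\ref{GHM}, $E_n$ is irreducible for the given $n$. So it suffices to show that a root $\lambda$ of $E_n$ is not a square in $\QQ(\lambda)$. Equivalently, suppose $E_n(z^2)=g(z)h(z)$ nontrivially. Then $h(z)=\pm g(-z)$, and $E_n(z^2)=\pm g(z)g(-z)$ with $g$ monic of degree $n-1$ (note $\deg E_n = n+1-1=n$; I will recheck degrees).

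First I would exploit the norm and constant term. If $\lambda=\mu^2$ with $\mu\in K$, then $N_{K/\QQ}(\lambda)=N(\mu)^2$ is a square in $\QQ$. Since $E_n$ is monic with integer coefficients, $N(\lambda)=\pm E_n(0)$. From $E_n(x)=(x^{n-2}Q(x)+R(x))/(x-1)$ we get $E_n(0)=R(0)/(-1)=1$, so the norm is $(-1)^{n}\cdot 1$. For even $n$ (all listed residues are even) this gives $1$, which is a square. So the norm test alone fails, and more is needed.

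The next step is a parity argument mod $2$. Reduce over $\mathbb{F}_2$: if $E_n(z^2)=\pm g(z)g(-z)$ over $\ZZ$, then mod $2$ we get $E_n(z^2)\equiv g(z)^2$. This always holds in characteristic $2$, so it gives nothing. Instead I would use the root-location structure, which is where GHM's setting is natural. $E_n$ is a Salem-type polynomial: it has one real root $\lambda>1$, its reciprocal, and the remaining roots on the unit circle (this is the form of $x^{n-2}Q+R$ with $R$ the reciprocal of $Q$ up to sign). If $\lambda=\mu^2$ with $\mu\in K$, then $\mu$ is an algebraic integer of degree $n$ whose conjugates square to the conjugates of $\lambda$. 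Hence $\mu$ has exactly one conjugate $\pm\sqrt\lambda$ outside the unit circle, and all others lie on or inside it. Its minimal polynomial $g$ is then a Salem (or Pisot) polynomial of the same degree, with $g(z)g(-z)=\pm E_n(z^2)$.

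The main obstacle is excluding this case. I would try to use Lemma~\ref{EF}: $E_n(z^2)=z^nF_n((z+z^{-1})/2)$ is reciprocal in $z$. A factor $g$ would correspond, via $x=(z+z^{-1})/2$, to a factorization of $F_n$ as $F_n(x)=\pm G(x)G(-x)$, using $F_n(-x)=(-1)^nF_n(x)$. Comparing at specific points then gives a contradiction. Evaluating at $x=0$ gives $F_n(0)=\pm G(0)^2$, so $F_n(0)$ must be $\pm$ a square. With $U_m(0)=(-1)^{m/2}$ for even $m$, we get $F_n(0)=(-1)^{n/2}(1-1+1)=\pm1$, which is again inconclusive. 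So I would instead evaluate at $x=1$ and $x=1/2$, i.e.\ $z=1$ and $z=e^{i\pi/3}$. Here $E_n(1)$ is $(\text{derivative of numerator at }1)=(n-2)Q(1)+Q'(1)+R'(1)=-(n-2)+2+5=9-n$. This requires $\pm E_n(1)=g(1)g(-1)$, and together with the congruence constraints mod $30$ (mod $3$ and $5$ behaviour of $9-n$ and of $E_n(-1)$) I expect this to force a square condition that fails. If it does not, I would fall back on a Galois-theoretic argument: the splitting field of $E_n(z^2)$ over that of $E_n$ is a $2$-elementary extension generated by square roots of conjugates. Its product-of-roots constraints can be checked modulo a suitable prime $p$ where $E_n$ has a root that is a non-residue.
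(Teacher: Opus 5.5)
You have a genuine gap. The reduction is fine and matches the paper's Lemma~\ref{dichotomy}: a nontrivial factorization must be $E_n(z^2)=g(z)g(-z)$, with $g\in\ZZ[z]$ monic irreducible of degree $n$ (not $n-1$; the sign is $+$ because $n$ is even). But you never rule this case out, and the routes you sketch do not work as stated.

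\begin{itemize}
\item \textbf{Root location.} It cannot help by itself: the square of a Salem number is a Salem number of the same degree and field. So $\lambda=\mu^2$ with $\mu\in\QQ(\lambda)$ is compatible with everything you list.
\item \textbf{Passing to $F_n(x)=\pm G(x)G(-x)$.} This presupposes that $g$ is self-reciprocal. Reciprocity of $E_n(z^2)$ only gives $z^n g(1/z)=\pm g(z)$ or $\pm g(-z)$, and the second case does occur. For example, $z^4-3z^2+1=(z^2-z-1)(z^2+z-1)$, while the associated polynomial in $x=(z+z^{-1})/2$, namely $4x^2-5$, is irreducible. So this route cannot prove irreducibility of $E_n(z^2)$.
\item \textbf{Point evaluations.} These carry no obstruction. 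Writing $g(z)=A(z^2)+zB(z^2)$, you get $E_n(1)=9-n=A(1)^2-B(1)^2$, which is solvable because every odd integer is a difference of two squares. Likewise $F_n(1/2)=-U_{n+2}(1/2)=\pm1$ for the listed $n$.
\item \textbf{The congruences and the fallback.} The congruences mod $30$ play no role at this stage; only evenness of $n$ is needed. The non-residue fallback would need a prime chosen uniformly in $n$, which you do not supply.
\end{itemize}

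The missing idea is that the $2$-adic test you discarded works one level up, modulo $4$, on the top two coefficients. Write $g(z)=A(z^2)+zB(z^2)$, so that $E_n(y)=A(y)^2-yB(y)^2$. Here $A=y^m+a_{m-1}y^{m-1}+\cdots$ is monic of degree $m=n/2$ and $B=b_{m-1}y^{m-1}+\cdots$.

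Since $E_n(y)=y^n+y^{n-1}+0\cdot y^{n-2}+\cdots$, comparing coefficients of $y^{n-1}$ gives $2a_{m-1}-b_{m-1}^2=1$. Hence $b_{m-1}$ is odd, and then $a_{m-1}=(1+b_{m-1}^2)/2\equiv 1 \pmod 4$ is odd. Comparing coefficients of $y^{n-2}$ gives $a_{m-1}^2+2a_{m-2}-2b_{m-1}b_{m-2}=0$, but the left side is odd. This contradiction is exactly how the paper finishes.
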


In order to prove this theorem we need the following simple facts from field theory. 

\begin{lemma}\label{dichotomy}
Suppose $E(z) \in \ZZ[z]$ is a monic irreducible polynomial of even degree. Then either $E(z^2)$ is irreducible over $\ZZ[z]$ or there exists an irreducible polynomial $P(z) \in \ZZ[z]$ such that 
$E(z^2)= P(z) P(-z)$. 
\end{lemma}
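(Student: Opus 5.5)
The plan is the standard field-degree argument. Let $d=\deg E$, let $\alpha$ be a root of $E(z^2)$ in some splitting field, and set $\beta=\alpha^2$. Then $E(\beta)=0$, and since $E$ is irreducible we have $[\QQ(\beta):\QQ]=d$. The tower $\QQ\subseteq\QQ(\beta)\subseteq\QQ(\alpha)$, together with the fact that $\alpha$ satisfies $z^2-\beta$ over $\QQ(\beta)$, gives $[\QQ(\alpha):\QQ]\in\{d,2d\}$.

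If the degree is $2d$, the minimal polynomial of $\alpha$ over $\QQ$ divides $E(z^2)$ and has degree $2d=\deg E(z^2)$. Both polynomials are monic, so they coincide and $E(z^2)$ is irreducible. Gauss's lemma then transfers this to irreducibility in $\ZZ[z]$, since $E(z^2)$ is monic and hence primitive.

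If the degree is $d$, let $P(z)$ be the minimal polynomial of $\alpha$. It is monic of degree $d$ and has integer coefficients: $\alpha$ is an algebraic integer because $E(z^2)$ is monic in $\ZZ[z]$, so Gauss's lemma applies again. Moreover $P$ divides $E(z^2)$.

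Next consider $P(-z)$. Since $E(z^2)$ is even, $-\alpha$ is also a root of it. The polynomial $\tilde P(z):=(-1)^dP(-z)=P(-z)$ is monic, because $d$ is even; this is exactly where the even-degree hypothesis is used. It is irreducible with root $-\alpha$, and it divides $E((-z)^2)=E(z^2)$. There are now two subcases.

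\emph{Subcase $P(-z)\neq P(z)$.} Then $P(z)$ and $P(-z)$ are distinct monic irreducible polynomials, hence coprime. Both divide $E(z^2)$, so their product does, and comparing degrees ($2d$) and leading coefficients gives $E(z^2)=P(z)P(-z)$.

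\emph{Subcase $P(-z)=P(z)$.} This is the main point needing care. Here $P$ is even, so $P(z)=S(z^2)$ for some monic $S\in\ZZ[z]$ of degree $d/2$. But then $S(\beta)=S(\alpha^2)=P(\alpha)=0$. This contradicts $[\QQ(\beta):\QQ]=d$, since $\beta$ would be a root of a polynomial of degree $d/2<d$.

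So in the degree-$d$ case we always obtain the factorization $E(z^2)=P(z)P(-z)$ with $P$ irreducible in $\ZZ[z]$, which completes the dichotomy.
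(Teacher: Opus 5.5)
Your proof is correct and follows the paper's argument. You use the tower-law dichotomy $[\QQ(\alpha):\QQ]\in\{d,2d\}$, rule out $P(-z)=P(z)$, and then use that $P(z)$ and $P(-z)$ are distinct irreducible factors of $E(z^2)$ to get $E(z^2)=P(z)P(-z)$. The differences are minor: in the even subcase you get the contradiction directly from $S(\beta)=0$ with $\deg S=d/2<d$, whereas the paper writes $E(z^2)=R(z^2)S(z)$, shows the cofactor is even, and obtains a factorization $E=RT$. You also state explicitly that the even-degree hypothesis is what makes $P(-z)$ monic, which the paper leaves implicit.
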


\begin{proof}
Set
\[
F(z):=E(z^2).
\]
Then $\deg F=2n$, where $n = \deg E$. 
Let $\alpha \in \overline{\QQ}$ to a root of $E$ and pick $\beta \in \overline{\QQ}$ such that $\beta^2=\alpha.$ Note that since $\alpha$ and
$\beta$ are roots of monic polynomials $E(z)$ and $E(z^2)$ and hence are both algebraic integers. Let $P(z)$ be the minimal polynomial of $\beta$ over $\QQ$. Since $\beta$ is an algebraic integer, it follows from  \cite[Chapter 2. Theorem 1]{Marcus} that $P(z) \in \ZZ[z]$. Note that $Q(z):= P(-z)$ is the minimal polynomial of $-\beta$ and since $F(\pm \beta)= E(\alpha)=0$, we deduce 
\begin{equation}\label{divis}
P(z)|F(z) \text{ \, and \, } Q(z)|F(z). 
\end{equation}

Since $E(z)$ is the minimal polynomial of $\alpha$ over $\QQ$ and has degree $n$, it follows that
\[
[\mathbb{Q}(\alpha):\mathbb{Q}]=n.
\]
Since  $\beta$ is a root of $x^2-\alpha\in \mathbb{Q}(\alpha)[x] $ it follows that the extension $ \QQ(\beta)/\QQ (\alpha)$ has degree at most $2$. 
\
From here we deduce that
\[
\deg P=[\QQ(\beta):\QQ] = [\QQ(\beta):\QQ(\alpha)] [\QQ(\alpha):\QQ] \in\{n,2n\}.
\]

We consider two cases:

\noindent
{\it Case 1. } Suppose $\deg P(z)=2n$. Since $P(z)|F(z)$ by \eqref{divis}, we have  $$E(z^2)= F(z)= P(z)$$ is irreducible.  

\medskip 
\noindent
{\it Case 2}. Now suppose that $\deg P(z)=n$. Using  \eqref{divis} we can write $F(z)=P(z) S(z)$ for some monic polynomial $S(z)$ of degree $n$.

Note that $F(z)=E(z^2)$ is invariant under $ z \mapsto -z$. Hence we have
$F(z)= P(-z) S(-z)$. If $P(z)=P(-z)$, then a coefficient comparison shows  that odd-degree coefficients of $P(z)$ are zero and $P(z)$ is indeed a polynomial in $z^2$: $P(z)=R(z^2)$. This shows that 
$$E(z^2)= R(z^2) S(z),$$ 
which implies that $S(-z)=S(z)$, that is, $S(z)$ is also a polynomial in $z^2$. Write $S(z)=T(z^2)$. From here we obtain the decomposition $E(z)= R(z) T(z)$ which contradicts the irreducibility of $E(z)$. So, $P(z)$ and $P(-z)$ are two distinct irreducible polynomials, both dividing $F(z)$. Since $\deg F(x)= \deg P(z)+ \deg P(-z)$, we deduce $E(z^2)= P(z)P(-z)$. This finishes the proof in the second case.
\end{proof}

\begin{proof}[Proof of Theorem \ref{ez2}]
Since $E_n(z)$ has degree $n$ and condition on $n$ module $30$, forces $n$ to be even, then using Lemma \ref{dichotomy}, we need to rule out the possibility that $E(z^2)= P(z) P(-z)$ for some integer monic polynomial $P(z)$ of degree $n=2m$. Separating the odd and even degree terms, write $P(z)= A(z^2)+ z B(z^2)$ for 
integer polynomials $A(z)$ and $B(z)$. It thus follows that 
\[ E_n(z^2)= P(z)P(-z)= (A(z^2)+ z B(z^2))(  A(z^2)- z B(z^2))= A(z^2)^2- z^2 B(z^2)^2.\]
Writing $y=z^2$ we must have $$E(y)= A(y)^2 - y B(y)^2.$$ 

Assume $n >5$. We write
\[ E_n(y) = \frac{y^{n+1}-y^{n-1}- y^{n-2} + y^3+ y^2-1  }{y-1}= \frac{y^{n+1}-1}{y-1} -  \frac{y^{n-1}-1}{y-1} -  \frac{y^{n-2}-1}{y-1} +  \frac{y^3-1}{y-1} +  \frac{y^2-1}{y-1}. \]
Note that each summand $ \frac{y^r-1}{y-1}$ is a sum of powers $y^j$ with $0 \le j \le r-1$. Keeping track of the cancellation of terms we can easily see that the coefficients of 
$y^j$ in the sum is given by 
\[ c_j= \begin{cases} 1 & \textrm{if } \, j \in \{ n,n-1,1,0 \}\\
0 & \textrm{if } \, j \in \{ n-2,2 \}   \\
1 & \textrm{if } \, 3 \le j \le n-3 \\
\end{cases}    \]
Writing $n=2m$, it is clear that $\deg A(y)=m$ and $\deg B(y) \le m-1$. Note that since $P(z)= A(z^2)+ z B(z^2)$ and $P(z)$ is monic, hence $A(y)$ is monic. We expand both polynomials as 
\[ A(y)= y^m + a_{m-1}y^{m-1} + \cdots + a_1 y +a_0, \quad B(y)= b_{m-1} y^{m-1} + \cdots + b_1 y + b_0, \]

where all coefficients $a_i, b_i$ are integers. We will now derive a contradiction by comparing these coefficients. First note that since $c_{2m-1}=1$, hence the coefficients of $y^{2m-1}$ in $ A(y)^2 - y B(y)^2$ is $1$. This implies that 
\[ 2a_{m-1} - b_{m-1}^2=1 \hspace{2mm} \Rightarrow \hspace{3mm} b_{m-1} \equiv 1 \pmod{ 2 }
\hspace{2mm} \Rightarrow \hspace{3mm} b_{m-1}^2 \equiv 1 \pmod{ 8 } \hspace{2mm} \Rightarrow \hspace{3mm}  a_{m-1} \equiv 1 \pmod{ 4 }. \]
In particular, $a_{m-1}$ is odd. Now, since the coefficient of $y^{2m-2}$ in $E_n(y)$ is zero, we have
\[ 0= a_{m-1}^2 +2a_{m-2} - 2b_m b_{m-1} \equiv 1 \pmod{ 2 }, \]
which is a contradiction. This proves that $E(z^2)$ is irreducible. 
\end{proof}

\begin{proof}[Proof of Theorem \ref{main}]
By Theorem \ref{ez2}, we know that $E_n(z^2)$ is irreducible over $\QQ$. We now prove that $F_n$ is irreducible over $\QQ$. To see this suppose that 
$F_n(z)=g(z)h(z)$ with $g,h\in\mathbb{Q}[z]$ of degree at least $1$. 
Let $\deg g=r$, $\deg h=s$ so $r+s=n$.

Consider the polynomials 
\[
G(z):=z^r g \left(\frac{z+z^{-1}}{2}\right), \quad  H(z):=z^s h \left(\frac{z+z^{-1}}{2}\right). 
\]

Then $G$ and $H$ have degree $2r$ and $2s$, respectively. Moreover, we have 
\[
E_n(z^2)=  z^n F_n \left(\frac{z+z^{-1}}{2}\right)=G(z)H(z).
\]
which is a contradiction.  This contradicts Theorem \ref{ez2}.

\end{proof}

\bibliographystyle{amsplain}   
\bibliography{irr}

@article {YLZH,
    AUTHOR = {Yu, Qian and Liu, Fenjin and Zhang, Hao and Heng, Ziling},
     TITLE = {Note on graphs with irreducible characteristic polynomials},
   JOURNAL = {Linear Algebra Appl.},
  FJOURNAL = {Linear Algebra and its Applications},
    VOLUME = {629},
      YEAR = {2021},
     PAGES = {72--86},
      ISSN = {0024-3795,1873-1856},
   MRCLASS = {05C50 (05C31 11R09 12F05)},
  MRNUMBER = {4293741},
       DOI = {10.1016/j.laa.2021.07.013},
       URL = {https://doi.org/10.1016/j.laa.2021.07.013},
}

@book {CDRPS,
    AUTHOR = {Cvetkovic, Dragovs and Rowlinson, Peter and Simic,
              Slobodan},
     TITLE = {An introduction to the theory of graph spectra},
    SERIES = {London Mathematical Society Student Texts},
    VOLUME = {75},
 PUBLISHER = {Cambridge University Press, Cambridge},
      YEAR = {2010},
     PAGES = {xii+364},
      ISBN = {978-0-521-13408-8},
   MRCLASS = {05-02 (05C50)},
  MRNUMBER = {2571608},
MRREVIEWER = {Ligong\ Wang},
}

@book {Marcus,
    AUTHOR = {Marcus, Daniel A.},
     TITLE = {Number fields},
    SERIES = {Universitext},
   EDITION = {Second},
      NOTE = {With a foreword by Barry Mazur},
 PUBLISHER = {Springer, Cham},
      YEAR = {2018},
     PAGES = {xviii+203},
      ISBN = {978-3-319-90232-6; 978-3-319-90233-3},
   MRCLASS = {11-01 (11Rxx 11Txx 12-01)},
  MRNUMBER = {3822326},
       DOI = {10.1007/978-3-319-90233-3},
       URL = {https://doi.org/10.1007/978-3-319-90233-3},
}

@article {GHM,
    AUTHOR = {Gross, Benedict H. and Hironaka, Eriko and McMullen, Curtis
              T.},
     TITLE = {Cyclotomic factors of {C}oxeter polynomials},
   JOURNAL = {J. Number Theory},
  FJOURNAL = {Journal of Number Theory},
    VOLUME = {129},
      YEAR = {2009},
    NUMBER = {5},
     PAGES = {1034--1043},
      ISSN = {0022-314X,1096-1658},
   MRCLASS = {11C08 (12E10 20F55)},
  MRNUMBER = {2516970},
MRREVIEWER = {Michael\ E.\ Zieve},
       DOI = {10.1016/j.jnt.2008.09.021},
       URL = {https://doi.org/10.1016/j.jnt.2008.09.021},
}

@book {Biggs,
    AUTHOR = {Biggs, Norman},
     TITLE = {Algebraic graph theory},
    SERIES = {Cambridge Mathematical Library},
   EDITION = {Second},
 PUBLISHER = {Cambridge University Press, Cambridge},
      YEAR = {1993},
     PAGES = {viii+205},
      ISBN = {0-521-45897-8},
   MRCLASS = {05C50},
  MRNUMBER = {1271140},
MRREVIEWER = {Robin\ J.\ Wilson},
}

\end{document}